\newtheorem{theorem}{Theorem}[section]
\newtheorem{definition}[theorem]{Definition}
\newtheorem{lemma}[theorem]{Lemma}
\newtheorem{proposition}[theorem]{Proposition}
\begin{document}
\abovedisplayskip=6pt plus 1pt minus 1pt \belowdisplayskip=6pt
plus 1pt minus 1pt
\thispagestyle{empty} \vspace*{-1.0truecm} \noindent
\vskip 10mm

\begin{center}{\large Kauffman-Harary conjecture for alternating virtual knots} \end{center}

\vskip 5mm
\begin{center}{Zhiyun Cheng\\
{\small School of Mathematical Sciences, Beijing Normal University
\\Laboratory of Mathematics and Complex Systems, Ministry of
Education, Beijing 100875, China
\\(email: czy@bnu.edu.cn)}}\end{center}

\vskip 1mm

\noindent{\small {\small\bf Abstract} In 1999, Kauffman-Harary conjectured that every non-trivial Fox $p$-coloring of a reduced, alternating knot diagram with prime determinant $p$ is heterogeneous. Ten years later this conjecture was proved by W. Mattman and P. Solis. Mathew Williamson generalized this conjecture to alternating virtual knots and proved it for certain families of virtual knots. In the present note, we use the methods of W. Mattman and P. Solis to give an affirmative answer to the Kauffman-Harary conjecture for alternating virtual knots.
\ \

\vspace{1mm}\baselineskip 12pt

\noindent{\small\bf Keywords} alternating virtual knot; Kauffman-Harary conjecture\ \

\noindent{\small\bf MR(2010) Subject Classification} 57M25\ \ {\rm }}

\vskip 1mm

\vspace{1mm}\baselineskip 12pt

\section{Introduction}
The fundamental problem in classical knot theory is the classification of knots up to ambient isotopy. One of the simplest knot invariant is the 3-coloring invariant, or more generally, the Fox $n$-coloring \cite{Fox1961}. A \emph{Fox $n$-coloring} denotes a representation from the fundamental group of the knot complement to the dihedral group. Or equivalently speaking, a Fox $n$-coloring is an assignment of $\{0, 1, \cdots, n-1\}$ to the arcs of a knot diagram such that at each crossing the average of the integers at the under-arcs equals to the integer assigned to the over-arc mod $n$. Usually we denote col$_n(K)$ the number of distinct Fox $n$-colorings of a knot $K$. It is not difficult to check that this is a knot invariant. From this invariant one can define some other knot invariants, such as the minimum number of colors \cite{Kau2008}. The minimum number of colors, which is denoted by min col$_n(K)$, is the minimum number of distinct colors that are needed to produce a non-trivial Fox $n$-coloring among all diagrams of $K$. Evidently, the minimum number of colors min col$_n(K)$ is an invariant of the knot $K$. However in general it is very difficult to compute. See \cite{Kau2008,Kau2012} and \cite{Sai2010} for some recent progress.

In 1999, L. Kauffman and F. Harary \cite{Har1999} conjectured that the minimum number of colors min col$_p(D)$ of a reduced alternating knot diagram $D$ with prime determinate $p$ is exactly the crossing number of $D$, hence it equals to the crossing number of the knot $K$ that $D$ represents. It means that for any non-trivial Fox $p$-coloring of $D$, different arcs are assigned by different colors, i.e. min col$_p(D)=c(K)$. According to \cite{Mat2009} we say a coloring is \emph{heterogeneous} if it assigns different colors to different arcs. We remark that in general min col$_p(D)\neq$ min col$_p(K)$. For instance min col$_5(T_{2, 5})=4$, but $c(T_{2, 5})=5$ \cite{Kau2008}. Kauffman-Harary conjecture has been proved to be valid for several cases, see \cite{Asa2004,Dow2010}, and finally it was proved by T. W. Mattman and P. Solis in \cite{Mat2009}. There are several versions of generalized Kauffman-Harary conjecture. For example, in \cite{Asa2004} it was conjectured that if $K$ is an reduced alternating diagram of a prime knot then different arcs represent different elements of $H_1(M_K^{(2)}, Z)$, here $M_K^{(2)}$ denotes the 2-fold cover of $S^3$ branched over $K$. If the determinate of $K$ is a prime integer then this generalized conjecture is equivalent to the original version. On the other hand it is natural to consider another version of generalized Kauffman-Harary conjecture that replaces the classical alternating knots by alternating virtual knots. In \cite{Mat2007}, by studying the $k$-swap move Mathew Williamson showed that this generalized Kauffman-Harary conjecture holds for some alternating virtual pretzel knot diagrams and alternating virtual 2-bridge knot diagrams. The main aim of this note is to prove the theorem below.
\begin{theorem}
Let $D$ be a reduced alternating virtual knot diagram with a prime determinate $p$, then each non-trivial Fox $p$-coloring of $D$ is heterogeneous.
\end{theorem}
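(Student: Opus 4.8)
The plan is to follow the linear-algebraic skeleton of Mattman and Solis while replacing every planar ingredient by its analogue on the surface on which the virtual diagram is realized. First I would record the structural consequence of primality. A Fox $p$-coloring is a solution of a linear system over $\mathbb{Z}/p$ whose coefficient matrix is the coloring matrix of $D$; modulo the trivial (constant) colorings, the integral solution module has a presentation whose cokernel $H$ has order $|\det D| = p$. Since $p$ is prime, $H \cong \mathbb{Z}/p$ is cyclic, so $\operatorname{Hom}(H,\mathbb{Z}/p)\cong\mathbb{Z}/p$ is one-dimensional, and every non-trivial $p$-coloring has the form $x\mapsto a\,f_0(x)+b$ for a fixed generating coloring $f_0$ and $a\in(\mathbb{Z}/p)^{\times}$, $b\in\mathbb{Z}/p$. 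Because the affine map $t\mapsto at+b$ with $a\neq 0$ is a bijection of $\mathbb{Z}/p$, the property ``distinct arcs receive distinct colors'' is constant across this whole orbit. Hence it suffices to prove that a single non-trivial coloring is heterogeneous, equivalently to derive a contradiction from the assumption that some non-trivial coloring assigns one color to two distinct arcs.

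Next I would set up the Goeritz data on the surface. An alternating virtual diagram is checkerboard colorable on its supporting surface $\Sigma$ — alternation supplies a consistent two-colouring of the complementary regions of $\Sigma$ — so the regions split into two classes; taking the black regions as vertices and the classical crossings as edges yields the Tait graph $\Gamma\subset\Sigma$, and the associated Goeritz matrix $G$ has reduced determinant $|\det G'|=|\det D|=p$. Alternation forces all crossings to contribute the same sign, so $G$ is, up to overall sign, the graph Laplacian of $\Gamma$, and the matrix--tree theorem identifies $p$ with the number of spanning trees of $\Gamma$. In this dictionary a Fox coloring corresponds to a vertex potential $\vec{x}$ with $G\vec{x}\equiv 0\pmod p$, the color of each arc is recovered as a potential drop of $\vec{x}$ along the corresponding edge, and two arcs share a color precisely when two such drops coincide. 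The virtual crossings are transparent here: they neither break arcs nor enter the coloring equations, so they only record how $\Gamma$ is embedded in $\Sigma$.

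With this translation, the core step is to show that a repeated potential drop is incompatible with $\Gamma$ having a prime number of spanning trees. Assuming two distinct edges carry equal drops, I would, following Mattman--Solis, extract from this coincidence a proper, non-trivial block of the coloring system: the equal-color set of arcs cuts out a sub-diagram (a delete/contract minor of $\Gamma$) whose determinant is a divisor $d$ of $p$ with $1<d<p$, the bound $d>1$ coming from non-triviality of the coloring and the bound $d<p$ from reducedness. Here reducedness — the absence of nugatory classical crossings — is what guarantees that $\Gamma$ is loopless and bridgeless, so that the Goeritz form has the full rank $n-1$ and the minor in question is a genuine proper factor. This contradicts the primality of $p$ and finishes the proof.

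I expect the main obstacle to be exactly this last extraction in the virtual setting, because the original Mattman--Solis argument uses planarity — Jordan-curve separation of the regions and the duality between the two checkerboard graphs — to locate the offending minor, and these tools are unavailable once $\Sigma$ has positive genus. The way around it is to phrase everything intrinsically through the Goeritz form and the critical (sandpile) group $K(\Gamma)\cong\mathbb{Z}/p$, which depend only on $\Gamma$ as an abstract weighted graph and do not see the genus of $\Sigma$, and then to check that the two features the classical proof really needs — sign-definiteness of the Goeritz form (from alternation) and looplessness together with the correct rank (from reducedness) — both persist for reduced alternating virtual diagrams. A secondary point to verify is that the surface checkerboard coloring and the normalization $|\det G'|=p$ are independent of the chosen diagram, which follows from invariance of the determinant under the generalized Reidemeister moves.
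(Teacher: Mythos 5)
Your opening reduction is correct and agrees with the paper: when $\det D=p$ is prime, the Smith normal form of the reduced coloring matrix is $\mathrm{diag}(1,\dots,1,p)$, so every non-trivial coloring has the form $aX_0+bT$ for a fixed one, and heterogeneity of a single non-trivial coloring implies it for all (this is the paper's ``fundamental coloring'' step). Everything after that, however, rests on a dictionary that you never establish and that is in fact where planarity is used in the classical arguments. You translate Fox colorings into vertex potentials on a Tait graph $\Gamma\subset\Sigma$, identify $|\det D|$ with the number of spanning trees of $\Gamma$ via the matrix--tree theorem, and work with the critical group of $\Gamma$. On a surface of genus $g>0$ the diagram has $2g$ fewer complementary regions than a planar diagram with the same number of crossings, and a Fox coloring need not integrate to any region potential at all: the potential is defined by summing arc colors along paths, and around a non-contractible loop of $\Sigma$ this sum can have non-zero monodromy. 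Consequently neither the bijection ``coloring $\leftrightarrow$ potential with $G\vec{x}\equiv 0$'' nor the identity ``$|\det D|=$ number of spanning trees of $\Gamma$'' is available; the paper instead proves and uses a different identity, namely that $\det D$ equals the number of Euler circuits of a two-in two-out directed graph built from the diagram (via the interlace polynomial results of Balister--Bollob\'{a}s--Riordan--Scott). You also skip a genuinely virtual subtlety the paper must dispose of first: for virtual diagrams the determinant is not even well defined through a single minor (the paper exhibits a diagram whose $(1,1)$ and $(4,4)$ minors have absolute values $1$ and $3$); Reidemeister invariance does not repair this, and one needs the paper's Lemma 2.2 that for \emph{reduced alternating} virtual diagrams all minors agree.

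The second, and more serious, gap is the core extraction, which you attribute to Mattman--Solis but do not reconstruct. The sentence asserting that the equal-color arcs cut out a minor whose determinant $d$ satisfies $1<d<p$, with $d>1$ ``from non-triviality'' and $d<p$ ``from reducedness,'' is a restatement of the theorem, not an argument. The actual mechanism, in both Mattman--Solis and this paper, is: the columns $w_i$ of the adjugate of $M'(D)$ are colorings; if some $w_i\equiv 0 \pmod p$ then $\frac{1}{p}w_i$ is a pseudo coloring solving $M(D)X=e_i-e_n$; an analysis of the maximal color along an Euler circuit shows the maximal-color arcs form one consecutive block $H$; and only then does primality get contradicted, using multiplicativity of the determinant under connected sum together with the bound $\det\geq$ (number of crossings) for reduced alternating virtual diagrams. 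Crucially, the specifically virtual obstacle lives exactly here: the block $H$ may be pierced by the rest of the diagram at virtual crossings, so $D$ is not yet a connected sum, and the paper must push those crossings out of the regions bounded by $H$ region by region using virtual moves before Proposition 2.3 applies. Your framing makes this difficulty invisible rather than resolving it. Finally, even granting non-vanishing of the $w_i$, heterogeneity does not follow directly; the paper needs the mirror-image trick $M(D_m)=M(D)^{T}$ to upgrade ``no zero column of the adjugate'' to ``no zero entry of the adjugate,'' and no counterpart of this step appears in your proposal.
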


In section 2, we review the definition of quandle and the basic background of virtual knot theory. Some classical invariants, such as the coloring invariant and the validity of the determinate of virtual knots will be discussed. In section 3, based on the methods of Mattman and Solis developed in \cite{Mat2009} we will prove the generalized Kauffman-Harary conjecture as shown in Theorem 1.1.

\section{Dihedral quandle and virtual knots}
In this section, we review some basic results on Fox $n$-coloring and virtual knot theory. First we take a quick review of the definition of quandle \cite{Joy1982,Mat1984}.
\begin{definition}
A quandle $Q$ is a nonempty set with a binary operation $\ast$: $Q\times Q\rightarrow Q$ which satisfies the following conditions:
\begin{enumerate}
\item $a\ast a=a$ for any $a\in Q$
\item $x\ast a=b$ have the only solution $x\in Q$, for any $a, b\in Q$
\item $(a\ast b)\ast c=(a\ast c)\ast (b\ast c)$ for any $a, b, c\in Q$
\end{enumerate}
\end{definition}

We can use the elements of a (finite) quandle to color the arcs of a knot diagram, such that at each crossing the following condition holds. See Figure 1 below.
\begin{center}
\includegraphics{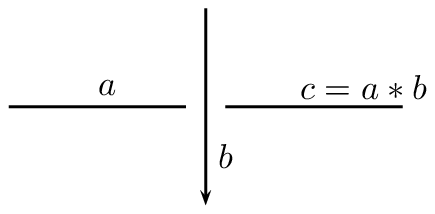} \centerline{\small Figure
1\quad}
\end{center}
We name such coloring a \emph{proper coloring}. Notice that the three conditions of the binary operation $\ast$ exactly correspond to the three Reidemeister moves, it is not difficult to find that with a given finite quandle the number of proper colorings is a knot invariant. In particular, let $D_n=\{0, 1, \cdots, n-1\}$ and the binary operation $\ast$ is defined by $a\ast b=2b-a$ mod $n$, obviously $(D_n; \ast)$ is a quandle and we name it the \emph{dihedral quandle} of order $n$. In this case each proper coloring is exactly a Fox $n$-coloring. Note that the trivial coloring is allowed here, by a trivial coloring we mean all the arcs are assigned the same color.

In order to investigate Fox $n$-colorings, it is convenient to introduce the \emph{coloring matrix} of a knot diagram. Given a knot diagram $D$, denote the crossings and arcs of $D$ by $\{c_1, \cdots, c_k\}$ and $\{a_1, \cdots, a_k\}$ respectively. The $k\times k$ coloring matrix $M(D)$ of $D$ can be defined as below
\begin{center}
$m_{ij}(D)=\left\{
             \begin{array}{ll}
               2, & \hbox{if $a_j$ is the over-arc at $c_i$;} \\
               -1, & \hbox{if $a_j$ is an under-arc at $c_i$;} \\
               0, & \hbox{otherwise.}
             \end{array}
           \right.$
\end{center}
In particular if $a_j$ is the over-arc and an under-arc at $c_i$, then $m_{ij}(D)=2-1=1$. Notice that when $D$ is a reduced and alternating, the entries of the each row and each column of $M(D)$ consist of 2, -1, -1, and $(k-3)$ 0's. With the coloring matrix $M(D)$, for each proper coloring one can represent the assigned integers on arcs of $D$ by a vector $X$ that satisfies
\begin{center}
$M(D)X=0$ mod $n$.
\end{center}
Choose any $(k-1)\times (k-1)$ minor of $M(D)$, for example the $(k, k)$ minor $M'(D)$, the \emph{determinate} of the knot $K$ that $D$ represents can be defined as the absolute value of the determinate of $M'(D)$. It follows obviously that $D$ has a non-trivial Fox $n$-coloring if and only if
\begin{center}
gcd$(n$, det $K)>1.$
\end{center}

Now let us turn to the virtual knot theory which was first proposed by L. Kauffman in \cite{Kau1999}. The motivation of introducing virtual knots mainly comes from the interpretation of virtual links as stably embedded circles in thickened surfaces. On the other hand virtual knots can also be used to realize all Gauss diagrams. With the viewpoint of knot diagram, virtual knot diagrams contain an extra type of crossing besides the classical crossing, say virtual crossing. A virtual crossing is usually denoted by a small circle placed around the crossing point. We say a pair of virtual knot diagrams represent the same virtual knot if one can be obtained from the other by a sequence of generalized Reidemeister moves. See Figure 2.
\begin{center}
\includegraphics{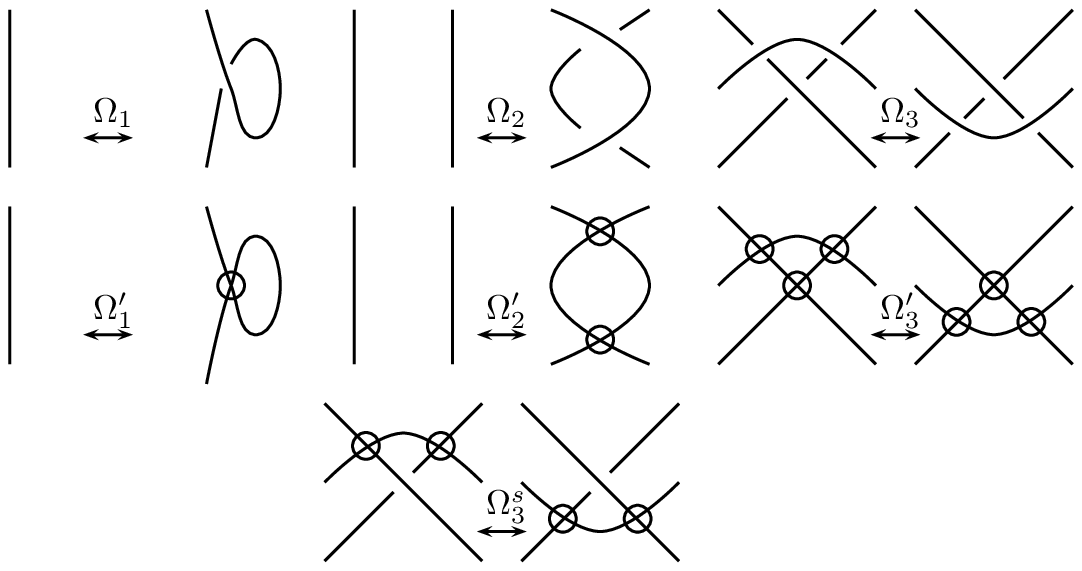} \centerline{\small Figure
2\quad}
\end{center}
We say a virtual knot diagram is \emph{alternating} if one travels along the knot diagram, the over-crossing and under-crossing succeed each other in alternation. Here the virtual crossings are not considered. In particular when the diagram has no virtual crossing, this definition coincides with the classical case.

Given a dihedral quandle $D_n$ and a virtual knot diagram $D$, similar to the classical case one can assign the elements of $D_n$ to the arcs of $D$ such that at each classical crossing the relation in Figure 1 holds. Here by an \emph{arc} of a virtual knot diagram we mean a strand from an under-crossing to another under-crossing, with only over-crossings and virtual crossings in its interior. We say such a coloring is a \emph{proper coloring} on virtual knot diagram and it is evident that the number of proper coloring is also an invariant for virtual knots. Before defining the coloring matrix of a virtual knot diagram, it would be benefit to make a little modification to the notion of reduced virtual knot diagram. Recall that in classical knot theory by a reduced knot diagram we mean that the diagram contains no nugatory crossing point, i.e. there does not exist a simple circle in the projection plane meeting the knot diagram at a single crossing. Given a virtual knot diagram $D$, we say a $($classical or virtual$)$ crossing is \emph{nugatory} if there exists a simple circle meeting $D$ at that crossing, after some virtual moves $\{\Omega_1', \Omega_2', \Omega_3', \Omega_3^s\}$. The following figure gives an example of nugatory classical crossing in a virtual knot diagram.
\begin{center}
\includegraphics{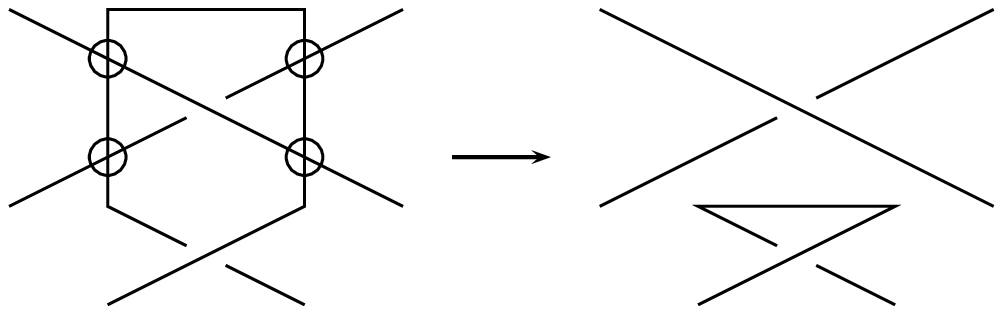} \centerline{\small Figure
3\quad}
\end{center}
Now if a virtual knot diagram contains no nugatory crossing, we say this diagram is \emph{reduced}. Given a virtual knot diagram the coloring matrix can be defined similarly as the classical case. Similar to the classical case, if a virtual knot diagram is reduced and alternating then each row and each column of the coloring matrix consists of 2, -1, -1 and several 0's. We remark that the modification of the original definition of reduced diagram is necessary, since it is easy to observe that the left virtual knot diagram in Figure 3 has no heterogeneous coloring.

It is natural to define the determinate of a virtual knot diagram $D$ with classical crossing number $k$ to be the absolute value of the determinate of any $(k-1)\times (k-1)$ minor of the coloring matrix. However this definition is ambiguous, in fact the result depends on the choice of the row and column. For example, let us consider the virtual knot diagram given in Figure 4.
\begin{center}
\includegraphics{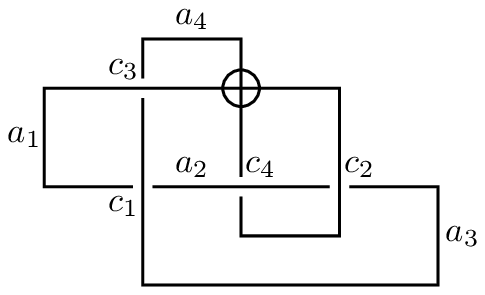} \centerline{\small Figure
4\quad}
\end{center}
Denote the arcs and classical crossings of the diagram above by $a_i$ and $c_i$ $(1\leq i\leq4)$ respectively. Then the associated coloring matrix can be represented by
\begin{center}
$\begin{pmatrix}
    -1 & -1 & 2 & 0 \\
    2 & -1 & -1 & 0 \\
    2 & 0 & -1 & -1 \\
    -1 & 2 & 0 & -1 \\
\end{pmatrix}$.
\end{center}
Direct calculation shows that the absolute value of the $(1,1)$ minor equals 1, but that of the $(4,4)$ minor equals 3. Hence different choice of minors will give different values. It follows that the determinate of a virtual knot diagram can not be defined simply in this way. Usually the \emph{determinate} of a virtual knot diagram is defined to be the greatest common divisor of the absolute values of all $(k-1)\times(k-1)$ minors \cite{Saw1999,Mat2007}. Later we will show that the original definition is valid for reduced alternating virtual knot diagrams. Before doing this, we want to spend a little time on investigating the cause of the strange phenomenon above.

Choose a labeling of the arcs and crossings suitably, it is possible to make the entries $m_{11}, \cdots, m_{kk}$, $m_{12}, \cdots, m_{k-1,k}$ and $m_{k1}$ are all -1. For example the diagram in Figure 4 is labeled in this way. The rest positions are all 0's except $k$ 2's. As before here $k$ denotes the number of classical crossings and for simplicity the diagram is assumed to be reduced. If the diagram is classical then the positions of these $k$ 2's are not arbitrary, since if this coloring matrix can be realized as a classical knot diagram then there are some conditions for the positions of these $k$ 2's. In this case, multiplying certain rows of the coloring matrix by -1 results a matrix with its rows adding to 0 \cite{Liv1993}. With some linear algebra it is not difficult to prove that the absolute value of the determinate of all $(k-1)\times(k-1)$ minors are equal. However in the virtual knot case, the positions of these $k$ 2's are arbitrary. Hence the coloring matrix of some virtual knot diagram can not be converted to a matrix by multiplying some rows by -1, such that the sum of the rows of this new matrix involves only 0 entries. This explains the reason why the original definition of determinate is not valid for virtual knot diagram.

Although the original definition of the determinate depends on the choice of the minor, it is still well-defined for reduced alternating virtual knot diagram.
\begin{lemma}
Let $D$ be a reduced alternating virtual knot diagram with $k$ classical crossings, and $M(D)$ the coloring matrix of $D$, then the determinates of all $(k-1)\times(k-1)$ minors are equal.
\end{lemma}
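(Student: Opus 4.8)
The plan is to exploit the structural fact already recorded above: when $D$ is reduced and alternating, every row and every column of $M(D)$ consists of exactly one $2$, two $-1$'s and zeros. Consequently each row sum and each column sum equals $2-1-1=0$. Writing $\mathbf{1}=(1,\dots,1)^{T}$ for the all-ones vector, this says precisely that $M(D)\,\mathbf{1}=0$ and $\mathbf{1}^{T}M(D)=0$; in particular $\det M(D)=0$. It is exactly this \emph{two-sided} vanishing that is special to the reduced alternating case: for a general virtual diagram (such as the one in Figure 4) the column sums need not vanish, and this is the true source of the ambiguity observed there. So the conceptual key is to observe that the alternating hypothesis forces each arc to be the over-arc at exactly one crossing, which is what delivers the vanishing column sums.

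First I would dispose of the degenerate case: if every $(k-1)\times(k-1)$ minor of $M(D)$ is zero there is nothing to prove, so assume some minor is nonzero. Since $\det M(D)=0$, this forces $\operatorname{rank} M(D)=k-1$, so both the right kernel and the left kernel of $M(D)$ are one-dimensional. As each contains the nonzero vector $\mathbf{1}$ (resp. $\mathbf{1}^{T}$), they are spanned by $\mathbf{1}$ and $\mathbf{1}^{T}$ respectively.

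The heart of the argument is the classical adjugate identity. Let $\operatorname{adj}M(D)$ be the adjugate, whose $(i,j)$ entry is the cofactor $(-1)^{i+j}\det(\widehat{M}_{ji})$, where $\widehat{M}_{ji}$ denotes $M(D)$ with row $j$ and column $i$ removed. Then $M(D)\operatorname{adj}M(D)=\det M(D)\,I=0$ and likewise $\operatorname{adj}M(D)\,M(D)=0$. The first identity shows that every column of $\operatorname{adj}M(D)$ lies in the right kernel of $M(D)$, hence is a scalar multiple of $\mathbf{1}$, i.e. is a constant column; the second shows that every row lies in the left kernel, hence is a constant row. A matrix all of whose columns are constant and all of whose rows are constant is a single scalar multiple of the all-ones matrix $J$, so $\operatorname{adj}M(D)=cJ$ for some integer $c$. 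Reading off the entries gives $(-1)^{i+j}\det(\widehat{M}_{ij})=c$ for all $i,j$, whence $|\det(\widehat{M}_{ij})|=|c|$ is independent of the choice of deleted row and column, which is exactly the assertion of the lemma.

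The genuinely delicate point I expect is not the cofactor manipulation, which is routine linear algebra of matrix-tree-theorem type, but rather pinning down the structural input that underlies it: one must verify carefully that the reduced alternating hypothesis really guarantees that each column sum vanishes (equivalently, that between its two under-crossings an arc passes over exactly one classical crossing, with no coincidence collapsing an entry to $m_{ij}=1$). This is precisely where the classical reasoning fails for arbitrary virtual diagrams, so once this hypothesis is invoked correctly the remainder of the proof follows the scheme above.
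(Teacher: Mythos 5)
Your proof is correct, and it starts from the same structural input as the paper's proof: because $D$ is reduced and alternating, every row and every column of $M(D)$ consists of one $2$, two $-1$'s and zeros, so all row sums and column sums vanish. The linear algebra you build on top of this, however, is genuinely different in execution. The paper simply asserts (``with some linear algebra it is easy to find'') the identity $M_{ij}=\frac{1}{k^2}\det(M(D)+N)$, where $N$ is the all-ones matrix, and reads off the conclusion from the fact that the right-hand side is independent of $(i,j)$; it never proves this identity. You instead give a complete, self-contained argument: $\det M(D)=0$, so in the nondegenerate case both the right and left kernels are one-dimensional and spanned by the all-ones vector, and the two annihilation identities $M(D)\operatorname{adj}M(D)=0=\operatorname{adj}M(D)\,M(D)$ force $\operatorname{adj}M(D)=cJ$. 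Besides filling in what the paper leaves to the reader, your route is more careful on a sign point that the paper elides: what is actually constant is the cofactor $(-1)^{i+j}\det(\widehat{M}_{ij})$, not the minor itself. For instance, for the coloring matrix of the standard alternating trefoil diagram, with rows $(-1,-1,2)$, $(2,-1,-1)$, $(-1,2,-1)$, the $(1,1)$ and $(1,2)$ minors are $3$ and $-3$. So the lemma as literally stated (and the paper's displayed formula) hold only up to sign; your conclusion $\lvert\det(\widehat{M}_{ij})\rvert=\lvert c\rvert$ is the honest statement of what is true, and it is all that is needed downstream, since the determinate of a diagram is defined as an absolute value.
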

\begin{proof}
The proof is analogous to the classical case. Since $D$ is reduced and alternating, it follows that each row or column of $M(D)$ consists of 2, -1, -1 and $(k-3)$ 0's. Therefore the sum of rows and the sum of columns are both 0. With some linear algebra it is easy to find that
\begin{center}
$M_{ij}=\frac{1}{k^2}$det$(M(D)+N)$,
\end{center}
here $M_{ij}$ is the $(i,j)$ minor of $M(D)$ and $N$ denotes the $k\times k$ matrix with all entries 1. The proof is complete.
\end{proof}

Let $D$ denote a reduced alternating virtual diagram with $k$ classical crossings, according to Lemma 2.2 the determinate of $D$ equals the absolute value of the determinate of any $(k-1)\times (k-1)$ minor of $M(D)$.

Before ending this section we give two properties about the determinate of a reduced virtual alternating knot diagram, which are well-known in the classical case.
\begin{proposition}
Let $D$ be a reduced alternating virtual knot diagram with $k$ classical crossings, then
\begin{enumerate}
  \item det $D\geq k$.
  \item In additional, if $D$ is the connected sum of two reduced alternating virtual knot diagrams, say $D_1$ and $D_2$, then det $D$=det $D_1 \times$ det $D_2$.
\end{enumerate}
\end{proposition}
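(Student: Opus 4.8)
The plan is to exploit the special form of the coloring matrix that the reduced–alternating hypothesis forces. First I would fix a labelling in which the over-arc at crossing $c_i$ is $a_i$; the hypotheses (each arc is the over-arc at exactly one crossing and an under-arc at exactly two, and no arc is simultaneously over and under at a crossing since $D$ is reduced) then show that $M(D)=2I-B$, where $B$ is a $0$--$1$ matrix with zero diagonal and exactly two $1$'s in every row and in every column. In particular all row sums and all column sums of $M(D)$ vanish, so Lemma 2.2 applies and $\det D$ equals the absolute value of any principal cofactor of $M(D)$.

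For part (1) I would read $B$ as the adjacency matrix of a directed graph $\Gamma$ on $k$ vertices in which every vertex has in-degree and out-degree $2$ (an Eulerian digraph) and no loops. Since $D$ is a one-component diagram, $\Gamma$ is connected, hence strongly connected, and since $D$ is reduced $\Gamma$ has no cut-edge --- the virtual analogue of ``no nugatory crossing $\Leftrightarrow$ no bridge in the Tait graph.'' The directed Matrix--Tree Theorem then identifies the principal cofactor $C_{vv}$ of $2I-B$ with the number of spanning arborescences of $\Gamma$ directed to $v$; because $\Gamma$ is Eulerian this count is independent of $v$, in agreement with Lemma 2.2, so $\det D=\operatorname{arb}(\Gamma)$. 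It then remains to prove $\operatorname{arb}(\Gamma)\ge k$. I would do this by a directed ear decomposition: write $\Gamma$ as a directed cycle with directed ears successively attached (possible since $\Gamma$ is strongly connected and bridgeless), verify the bound for the base cycle, and show that attaching an ear of length $\ell$ multiplies the arborescence count by at least $\ell$, exactly as the classical deletion--contraction argument yields $\tau(G)\ge |E(G)|$ for a $2$-edge-connected $G$.

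The hard part is precisely this last step. In the classical case one has a planar checkerboard (Tait) graph and $\det D$ is its number of spanning trees, a symmetric count to which the elementary $2$-edge-connected estimate applies directly; for a genuinely virtual diagram no such planar graph exists and $M(D)$ need not be symmetrizable (as the example of Figure 4 shows), so one is forced to count directed arborescences and to establish $\operatorname{arb}(\Gamma)\ge k$ by a directed argument rather than by quoting the undirected spanning-tree bound. Checking that ``reduced'' really supplies the bridgeless property needed to start the ear decomposition, and that each ear contributes multiplicatively, is where the real work lies.

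For part (2) the argument is routine once the blocks are arranged. Connected sum is performed in a disc meeting $D$ in two crossing-free strands, so it neither creates crossings nor disturbs the alternating pattern, and reduced alternating $D_1,D_2$ give a reduced alternating $D$ with $k=k_1+k_2$. Choosing the splice so that one connecting arc $e$ is an under-arc at exactly one crossing of $D_2$ and over at none, I would order the arcs as $[\,X_1,\,e,\,e',\,X_2\,]$ (internal arcs of $D_1$, the two connectors, internal arcs of $D_2$) and the crossings as $[\,C_1,\,C_2\,]$. Since crossings of $D_1$ never involve $X_2$ and crossings of $D_2$ never involve $X_1$, deleting the column $e'$ together with one $C_1$-row makes the reduced matrix block upper-triangular, with top-left block a $(k_1-1)\times(k_1-1)$ minor of $M(D_1)$ and bottom-right block equal to $M(D_2)$ whose $e$-column carries a single $-1$. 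Expanding along that single entry turns the second block into a $(k_2-1)\times(k_2-1)$ minor of $M(D_2)$, so the determinant factors as $\pm\,\det D_1\cdot\det D_2$; by Lemma 2.2 this minor computes $\det D$, and hence $\det D=\det D_1\cdot\det D_2$.
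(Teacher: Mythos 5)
Your reduction of part (1) to a graph-counting problem is sound and is in fact equivalent to the paper's: writing $M(D)=2I-B$ and applying the directed matrix--tree theorem gives $\det D=\mathrm{arb}(\Gamma)$, and since every vertex of $\Gamma$ has out-degree $2$, the BEST theorem identifies $\mathrm{arb}(\Gamma)$ with the number of Euler circuits of the $2$-in $2$-out digraph that the paper builds (this is exactly the Balister--Bollob\'{a}s--Riordan--Scott identification the paper cites). Your part (2) is also correct; it is the classical block-triangular computation, which is precisely what the paper invokes when it says Lemma 2.2 ``ensures the validity of the proof for classical knots'' and omits the details. The genuine gap is the lower bound $\mathrm{arb}(\Gamma)\ge k$, which you rightly flag as the hard part but for which your proposed directed ear decomposition fails. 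Directed ears do not behave like undirected ones: every interior vertex of a directed ear has a \emph{unique} outgoing edge, so that edge is forced into every in-tree, and the ear contributes no multiplicative factor. Concretely, the base case is already false --- a directed cycle on $\ell$ vertices has exactly \emph{one} arborescence toward any root, not $\ell$ --- and so is the inductive step: attach the ear $u\to w_1\to w_2\to u$ to the $2$-cycle $u\to v\to u$; the result is strongly connected, yet it still has exactly one arborescence toward $u$, whereas your lemma predicts the count should have tripled. So the induction collapses at both ends, and there is no elementary route here analogous to $\tau(G)\ge |E(G)|$ for undirected $2$-edge-connected graphs.

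There is also a hypothesis mix-up that matters. You invoke ``bridgeless'' as the consequence of reducedness, but a multigraph all of whose vertices have degree $4$ never has a bridge (every edge lies in a cycle when all degrees are even), so this condition is vacuous and cannot drive any argument. What reducedness actually supplies --- and what the true statement needs, since a nugatory crossing corresponds to a cut \emph{vertex}, e.g.\ a kinked trefoil has $4$ crossings but determinant $3$ --- is the absence of an articulation vertex; and in the virtual setting even this implication requires the paper's argument via Gauss diagrams and virtual moves, which your sketch does not address. The statement that a connected $2$-in $2$-out digraph with $k$ vertices and no articulation vertex has at least $k$ Euler circuits (equivalently, arborescences) is a genuinely nontrivial theorem: the paper does not reprove it but cites \cite{Bal2001}, whose proof goes through the interlace polynomial of \cite{Arr2000}. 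To repair your part (1) you must either cite that result as the paper does, or supply a proof of the directed counting bound by entirely different means; the ear-decomposition scheme as proposed cannot be patched.
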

\begin{proof}
The proof is divided into two parts.
\begin{enumerate}
  \item The basic idea of the proof mainly comes from \cite{Bal2001}, here we sketch the outline. Consider the shadow of the diagram $D$, which can be regarded as a 4-valent graph $G$ by placing vertices at each classical crossings. If $D$ contains no virtual crossing then this graph is a planar graph. In general $G$ may be not planar if $D$ contains some virtual crossings. As before we use $a_i$ and $c_i$ to denote the arcs and crossings of $D$, such that $a_i$ goes over $c_i$. If $a_i$ is an under-arc at $c_j$ then we orient the edge $v_iv_j$ from $v_j$ to $v_i$, here $v_i$ is the vertex corresponding to $c_i$. In this way we obtain a directed graph with in-degree 2 and out-degree 2. We still use $G$ to name it. The key point is that $G$ contains no articulation vertex if the diagram $D$ is reduced. In order to see this, it suffices to observe that if there is an articulation vertex in $G$ then the Gauss diagram of $D$ contains an isolated chord which has no intersection with other chords. Hence by applying some virtual moves $D$ can be deformed into a new diagram such that the articulation vertex corresponds to a nugatory crossing. According to \cite{Bal2001}, the determinate of $D$ equals the number of Euler circuits of $G$. With the aid of interlace polynomial \cite{Arr2000}, it was proved that if $G$ is a connected in-degree 2 out-degree 2 directed graph with $k$ vertices, and it contains no articulation vertices, then the number of Euler circuits of $G$ is at least $k$. The proof is finished.
  \item According to Lemma 2.2, the determinate of all $(k-1)\times(k-1)$ minors are equal. This ensures the validity of the proof for classical knots. Hence we omit it here.
\end{enumerate}
\end{proof}

\section{The proof of Theorem 1.1}
In this section we mimic the approach of Mattman and Solis to give the proof of Theorem 1.1.
\begin{proof}
Assume the number of classical crossing points of $D$ is $n$, as before we can construct an $n\times n$ coloring matrix $M(D)$. According to Lemma 2.2 the absolute values of the determinates of all $(n-1)\times(n-1)$ minors are the same. Without loss of generality, let us consider the submatrix $M'(D)$ formed by deleting the $n$-th row and $n$-th column. According to the preceding discussion, the absolute value of the determinate of $M'(D)$ equals $p$, which is a prime integer. Let $M'^{*}(D)$ be the adjoint of $M'(D)$ and $w_1, \cdots, w_{n-1}$ the columns of $M'^{*}(D)$, then it follows that
\begin{center}
$M'(D)w_i=0$ mod $p$.
\end{center}
Since $D$ is alternating, the sum of the rows of $M(D)$ is a zero vector. Hence we have
\begin{center}
$M(D)\begin{pmatrix}
    w_i\\
     0
\end{pmatrix}=0$ mod $p$.
\end{center}
In other words each $\begin{pmatrix}
    w_i\\
     0
\end{pmatrix}$ gives a proper coloring of $D$.

The key observation is that none of $w_i$ is a zero vector $($mod $p)$, or one can obtain a new diagram $D'$ from $D$ via some virtual moves such that $D'$ is a connected sum of two reduced alternating virtual knot diagrams. This contradicts the assumption that the determinate of $D$ is prime. In fact if $w_i=\overrightarrow{0}$ $($mod $p)$ for some $1\leq i\leq n-1$, it follows that
\begin{center}
$M(D)\begin{pmatrix}
    \frac{1}{p}w_i\\
     0
\end{pmatrix}=e_i-e_n$,
\end{center}
here $e_i$ denotes the column vector which has a 1 in the $i$th position as its only nonzero entry. Note that each entry of $\begin{pmatrix}
    \frac{1}{p}w_i\\
     0
\end{pmatrix}$ is an integer, hence it offers a pseudo $Z$-coloring of the diagram $D$. Here a \emph{pseudo coloring} \cite{Mat2009} means a way of coloring the arcs such that the zero vector on the right side of the equation $M(D)X=0$ is replaced by $e_i-e_n$. Next we show that if there is a pseudo coloring on $D$, then $D$ can be converted to the connected sum of two reduced alternating virtual knot diagrams. The method we use here mainly comes from the analysis given in \cite{Mat2009}, combining with some properties of virtual knots.

Consider the two-in two-out directed graph $G$ constructed in the proof of Proposition 2.3. Since the in-degree and out-degree are both 2 for each vertex, it follows that $G$ has a directed Euler circuit. We use $E$ to denote it. Since $D$ is alternating, each arc of $D$ is split into a pair of adjacent edges in $G$, hence the size $($the number of edges$)$ of $G$ is $2n$. If there exists a pseudo coloring $X$, i.e. $M(D)X=e_i-e_n$, let us consider the largest entry of $X$, say $h$. As before, let $v_j$ denote the vertex that corresponds to the crossing point $c_j$. By some simple calculation we conclude that
\begin{enumerate}
  \item if one ingoing edge of $v_j$ $(j\neq i, n)$ receives the color $h$, then the other three edges around $v_j$ must be colored by $h$;
  \item if one ingoing edge of $v_i$ receives the color $h$, then one outgoing edge is colored by $h-1$ and the other two edges are both colored by $h$;
  \item none ingoing edge of $v_n$ can receive the color $h$.
\end{enumerate}
Recall that $D$ is alternating. In the Euler circuit $E$ each vertex of $G$ will appear twice, according to the discussion above one of the $v_i$ in $E$ is the only vertex such that the color of the ingoing edge and that of the outgoing edge are $h$ and $h-1$ respectively. In other words if we consider a sub-path in $E$ such that all edges of it are colored by $h$, then the final vertex of it must be $v_i$. Therefore there is only one such sub-path in $E$, we use $H$ to denote it. Without loss of generality, we assume that $v_k$ is the initial vertex of $H$. Note that it is possible $k=n$.

Now let us go back to the diagram $D$. According to the discussion above, we find that one under-arc of $c_k$ $(c_i)$ receives the color $h$, but neither the other under-arc nor the over-arc is colored by $h$. Now we argue that if we chooses $c_k$ as the starting point and the under-arc with color $h$ as the first arc, when we travel along the diagram $D$ until meeting the under-crossing $c_i$ for the first time, all the arcs we visited are precisely all the arcs of $D$ with color $h$. First we show that all the arcs we meet are colored by $h$. In order to see this, let $c_m$ be the first crossing we meet such that the next under-arc is not colored by $h$. Obviously the color of the over-arc at $c_m$ can not be colored by $h$, hence $v_m$ must be an endpoint of the path $H$. It follows that $c_m=c_i$. On the other hand, if there are some other arcs with color $h$ that we did not visit, then the endpoints of the longest paths consisted of these arcs must be the endpoints of $H$. Since $H$ is the unique sub-path in $E$, all the arcs with color $h$ have been visited.

If the diagram $D$ contains no virtual crossing points, then all the arcs with color $h$ factors out of the diagram $D$. In fact if we still use $H$ to denote the arcs with color $h$, then $H$ constitutes a non-trivial summand of the diagram $D$. However if $D$ includes some virtual crossing points, this is not the case. Since the other portion $D-H$ may cross $H$ at some virtual crossings. For example Figure 5 gives a simple example of this.
\begin{center}
\includegraphics{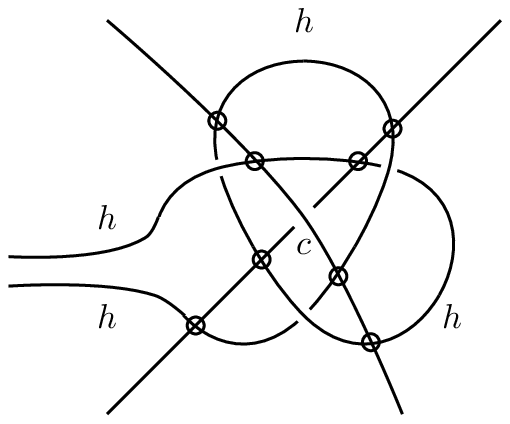} \centerline{\small Figure
5\quad}
\end{center}
In order to see $H$ still constitutes a summand of the diagram $D$, it suffices to move the classical and virtual crossings intersected by $D-H$ and itself out of the regions bounded by $H$. For instance, after moving the crossing $c$ in Figure 5 to the unbounded region the arcs with color $h$ constitutes a summand of the diagram. In general, for each region we can define the distance between this region and the unbounded region to be the least number of arcs that must be crossed by a path from a point in this region to a point far from the diagram. Every time we can move the crossings from a region to another adjacent region with smaller distance via some virtual moves. Repeating this process till we obtain a composite diagram. Notice that both the summands are non-trivial reduced alternating virtual knot diagram, by Proposition 2.3 this contradicts with the assumption that $p$ is prime.

Now we have proved that $w_i\neq \overrightarrow{0}$ mod $p$. According to \cite{Mat2009} we say a virtual knot diagram $D$ has a \emph{fundamental coloring} if for any two nontrivial colorings $X_1$ and $X_2$ there are integers $a, b$ such that $X_1=aX_2+bT$ mod $p$, here $T$ denotes the trivial coloring, i.e. the column vector with all entries 1. It was proved in \cite{Mat2009} that if a knot diagram has prime determinate then it has a fundamental coloring. The key point of this result is that $Z_p$ forms a finite field when $p$ is prime. With the similar idea we can prove that the diagram $D$ has a fundamental coloring. Obviously if $D$ admits one heterogeneous coloring then each nontrivial coloring of $D$ is heterogeneous. Hence it suffices to show $D$ admits a heterogeneous coloring. A beautiful observation in \cite{Mat2009} is that the coloring matrix $M(D_m)$ is the transpose of $M(D)$, here $D_m$ denotes the mirror image of the diagram $D$. Let $M'(D_m)$ be the submatrix of $M(D_m)$ formed by deleting the $n$-th row and $n$-th column and $M'^{*}(D_m)$ the adjoint of $M'(D_m)$. We use $u_1, \cdots, u_{n-1}$ to denote column vectors of $M'^{*}(D_m)$. Since $D_m$ is also a reduced alternating virtual knot diagram, then by the discussion above we have $u_i\neq \overrightarrow{0}$ mod $p$. Without loss of generality we assume that the first entry of $u_1\neq 0$ mod $p$. Recall that $\begin{pmatrix}
    u_1\\
     0
\end{pmatrix}$ gives a proper coloring of $D_m$, then this coloring distinguishes the arc $a_1$ from $a_n$. Since $D_m$ admits a fundamental coloring, it follows that all nontrivial colorings distinguish $a_1$ from $a_n$. As a corollary the first row vector of $M'^{*}(D_m)$ contains no 0 entries mod $p$, hence the first column vector of $M'^{*}(D)$ also contains no 0 entries. In other words there is a proper coloring of $D$ such that $a_n$ has different color from others. Repeating the argument above by replacing $a_n$ with $a_i$ $(1\leq i\leq n-1)$ we obtain that each nontrivial proper coloring of $D$ is heterogeneous. This finishes the proof.
\end{proof}

\end{document}